\documentclass[a4paper,11pt]{article}
\usepackage[ngerman, english]{babel}
\usepackage[T1]{fontenc}
\usepackage[utf8]{inputenc}
\usepackage{amsmath}
\usepackage{amssymb}
\usepackage{amsthm}
\usepackage{graphicx}
\usepackage{here}
\usepackage{enumerate}
\usepackage{lmodern}
\usepackage{fancyvrb}
\usepackage[plainpages=false]{hyperref}
\usepackage{caption}
\captionsetup{format=hang, labelfont=bf, textfont=small, justification=centering}

\oddsidemargin=0.in
\topmargin=-1.5cm
\textheight=23cm
\textwidth=16cm

\newtheorem{defi}{Definition}
\newtheorem{theo}[defi]{Theorem}
\newtheorem{lem}[defi]{Lemma}

\newtheorem{obs}[defi]{Observation}
\newtheorem{cor}[defi]{Corollary}
\theoremstyle{remark}

\title{Rotation $r$-graphs}

\author{Eckhard Steffen, Isaak H. Wolf \thanks{Funded by Deutsche Forschungsgemeinschaft (DFG) - 445863039} \\
		Paderborn University, Department of Mathematics, \\ Warburger Str. 100, 33098 Paderborn,
		Germany. \\ es@upb.de, isaak.wolf@upb.de}

\date{}

\begin{document}

\maketitle

\begin{abstract}
We study rotation $r$-graphs and show that for every $r$-graph $G$ of odd regularity there is a simple rotation $r$-graph $G'$ such that $G$ can be obtained form $G'$ by a finite number of $2$-cut reductions. As a consequence, some hard conjectures as the (generalized) Berge-Fulkerson Conjecture and Tutte's 3- and 5-flow conjecture can be reduced to rotation $r$-graphs.
\end{abstract}

\section{Introduction and basic definitions}
We consider finite graphs that may have parallel edges but no loops. A graph without parallel edges is called simple. A tree is homeomorphically irreducible if it has no vertex of degree 2 and if a graph $G$ has a homeomorphically irreducible spanning tree $T$, then $T$ is called a hist and $G$ a hist graph. The study of hist graphs has been a very active area of research within graph theory for decades, see for example \cite{Albertson_Thomassen_HIST_intro_1990, Harary_enumerate_HIST_1959, Ito_etal_HIST_2022}. 

Cubic hist graphs then have a spanning tree in which every vertex has either degree 1 or 3. They further have the nice property that their edge set can be partitioned into the edges of the hist and of an induced cycle on the leaves of the hist. A snark is a bridgeless cubic graph that is not 3-edge-colorable. 
Informally, a rotation snark is a snark that has a balanced hist and a $\frac{2 \pi}{3}$-rotation symmetry which fixes one vertex. 
Hoffmann-Ostenhof and Jatschka \cite{hoffmannostenhof2017special} studied rotation snarks and conjectured that there are infinitely many non-trivial rotation snarks. This conjecture was proved by M\'{a}\v{c}ajov\'{a} and \v{S}koviera \cite{Skoviera_Superpos_Rot_Snarks_2021} by constructing an infinite family of cyclically 5-edge-connected rotation snarks. 
It is natural, to ask whether some notoriously difficult conjectures can be proved for rotation snarks. As a first result in this direction, Liu et al.~\cite{CQ_Rot_Snarks_BFC_2021} proved that the Fulkerson-Conjecture
\cite{fulkerson1971blocking} is true for the rotation snarks of  \cite{Skoviera_Superpos_Rot_Snarks_2021}. 

We generalize the notion of rotation snarks to $r$-graphs of odd regularity and show that every $r$-graph of odd regularity can be "blown up" to a simple rotation $r$-graph (which produces many small edge-cuts). As a consequence, some hard long-standing open conjectures can be reduced to simple rotation $r$-graphs. However, our proof heavily relies on the fact that we allow $2$-edge-cuts. It would be interesting to study rotation $r$-graphs with high edge-connectivity.

\subsection{Rotation $r$-graphs}

The degree of a vertex $v$ of a graph $G$ is denoted by $d_G(v)$. The set of neighbors of a set $S \subseteq V(G)$ is 
$N_G(S) = \{u \colon u \in V(G) \setminus S \text{ and } u \text{ is adjacent to a vertex of } S\}$. If $S$ consists of a single vertex $v$, then we write $N_G(v)$ instead of $N_G(\{v\})$. The subscript may be omitted if there is no harm of confusion. A cycle is a graph whose components are eulerian.

	An \textit{automorphism} of a graph $G$ is a mapping $\alpha \colon V(G) \to V(G)$, such that for every two vertices $u,v\in V(G)$ the number of edges between $u$ and $v$ is the same as the number of edges between $\alpha(u)$ and $\alpha(v)$. For every $v \in V(G)$, the smallest positive integer $k$ such that $\alpha^k(v)=v$ is denoted by $d_{\alpha}(v)$.

	Let $v$ be a vertex of a tree $T$. An automorphism $\alpha$ of $T$ is \textit{rotational} with respect to $v$, if $d_{\alpha}(v)=1$ and $d_{\alpha}(u)=d_G(v)$ for every $u \in V(T) \setminus \{v\}$. The unique tree with vertex degrees in $\{1,r\}$ and a vertex $v$ with distance $i$ to every leaf is denoted by $T_i^r$. Vertex $v$ is unique and it is called the \textit{root} of $T_i^r$.

	An $r$-regular graph $G$ is an \textit{$r$-graph}, if $\vert \partial(S) \vert \geq r$ for every $S \subseteq V(G)$ of odd cardinality, where $\partial(S)$ is the set of edges with precisely one end in $S$. An $r$-regular graph $G$ is a \textit{$T_i^r$-graph}, if $G$ has a spanning tree $T$ isomorphic to $T_i^r$. If, additionally, $G$ has an automorphism that is rotational on $T$ (with respect to the root), then $G$ is a \textit{rotation} $T_i^r$-graph. Note that $G$ can be embedded in the plane (crossings allowed) such that the embedding has a $\frac{2 \pi}{r}$-rotation symmetry fixing the root. A \textit{rotation $r$-graph} is an $r$-graph that is a rotation $T_i^r$-graph for some integer $i$.  

\begin{obs}
	Let $r,i$ be positive integers, let $G$ be a $T_i^r$-graph with corresponding spanning tree $T$ and let $L$ be the set of leaves of $T$. The order of $G$ is $1+ \sum_{j=0}^{i-1} r(r-1)^{j}$, which is even if and only if $r$ is odd. In particular, if $G$ is an $r$-graph, then $r$ is odd, $G[L]$ is a cycle and $E(G)$ can be partitioned into $E(T)$ and $E(G[L])$.
\end{obs}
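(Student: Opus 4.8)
The plan is to treat the three assertions in order: the order formula, its parity, and then the structural consequences that hold when $G$ is an $r$-graph.

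First I would count the vertices of $T = T_i^r$ by distance from the root $v$. Writing $n_j$ for the number of vertices at distance $j$ from $v$, the root gives $n_0 = 1$; since $v$ has degree $r$ we get $n_1 = r$; and since every vertex at distance $j-1 < i$ is internal of degree $r$ and contributes one edge back to its parent, it has $r-1$ children, so $n_j = (r-1)n_{j-1}$ and hence $n_j = r(r-1)^{j-1}$ for $1 \le j \le i$. As all leaves sit at distance exactly $i$, summing over the levels yields $|V(G)| = 1 + \sum_{j=1}^{i} r(r-1)^{j-1} = 1 + \sum_{j=0}^{i-1} r(r-1)^{j}$, which is the claimed formula.

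For the parity I would simply reduce this expression modulo $2$, using that $i \ge 1$ so the $j=0$ summand is present. If $r$ is even, every term $r(r-1)^j$ is even, so the order is $1 + (\text{even})$ and hence odd. If $r$ is odd, then $r-1$ is even, so $r(r-1)^j$ is even for $j \ge 1$ while the $j=0$ term is $r \equiv 1 \pmod 2$; thus the sum is odd and the order $1+(\text{odd})$ is even. This gives the stated equivalence.

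Now suppose $G$ is an $r$-graph. Taking $S = V(G)$ gives $\partial(S) = \emptyset$, so the defining inequality $|\partial(S)| \ge r$ can hold only if $V(G)$ is \emph{not} odd; hence $|V(G)|$ is even, and by the parity statement just proved $r$ is odd. For the edge partition I would compare degrees in $T$ and in $G$: every non-leaf vertex already has degree $r$ in $T$, and since $G$ is $r$-regular, all of its incident $G$-edges are tree edges. Consequently any edge of $G$ outside $E(T)$ must have both endpoints in $L$, i.e.\ lies in $G[L]$; and since no edge of $T_i^r$ joins two leaves, $E(T) \cap E(G[L]) = \emptyset$. Therefore $E(G) = E(T) \cup E(G[L])$ is a partition. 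Finally, each leaf has degree $1$ in $T$ and degree $r$ in $G$, so exactly $r-1$ of its $G$-edges lie in $G[L]$; thus $G[L]$ is $(r-1)$-regular, and as $r$ is odd, $r-1$ is even. Every component of $G[L]$ is then connected with all degrees even, hence eulerian, so $G[L]$ is a cycle in the sense defined above.

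The arguments are all elementary. The only step needing a moment's care is noticing that being an $r$-graph forces even order by testing $S = V(G)$; this is precisely what links the structural conclusion back to the parity formula, and I expect no genuine obstacle beyond it.
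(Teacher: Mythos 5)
Your proof is correct; the paper states this Observation without proof, and your argument (level-by-level count of $T_i^r$, parity check, the test set $S=V(G)$ forcing even order, and the degree comparison showing $G[L]$ is $(r-1)$-regular hence a cycle in the paper's sense) is exactly the routine verification the authors leave implicit.
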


\subsection{Main result} \label{Subsec:Main}

Let $G$ be an $r$-graph and $S \subseteq V(G)$ be of even cardinality. If $\vert \partial(S) \vert =2$, then $N_G(S)$ consists of precisely two vertices, say $u,v$. Let $G'$ be obtained from $G$ by deleting $G[S] \cup \partial(S)$ and adding the edge $uv$. We say that $G'$ is obtained from $G$ by a \textit{$2$-cut reduction} (of $S$). The following theorem is the main result of this paper.

\begin{theo} \label{main result 1}
	Let $r$ be a positive odd integer. For every $r$-graph $G$ there is a simple rotation $r$-graph $G'$, such that $G$ can be obtained from $G'$ by a finite number of $2$-cut reductions.
\end{theo}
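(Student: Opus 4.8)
The plan is constructive. Write $n=|V(G)|$; since $rn$ is even and $r$ is odd, $n$ is even. First I would record the decomposition that makes the ``leaf cycle'' of the Observation appear naturally: an $r$-graph has a perfect matching $M$ (a standard consequence of the odd-cut condition via Tutte's theorem), and because $r$ is odd the graph $H:=G-M$ is $(r-1)$-regular with $r-1$ even, hence every component of $H$ is eulerian. Thus $H$ is exactly an admissible leaf cycle, and $G=H\cup M$. The strategy is to turn $r$ rotated copies of $H$ into (most of) the leaf cycle of a large rotational hist $T_i^r$, to reconstitute the matchings by contracting the internal tree, and finally to collapse the $r-1$ redundant copies --- all by $2$-cut reductions.

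The clean core of the reduction is the following intermediate graph $C$. Fix one matching edge $e_0=x_0y_0\in M$. Take $r$ disjoint copies $G^{(0)},\dots,G^{(r-1)}$ of $G$, delete the copy of $e_0$ in each (leaving ports $x^{(j)},y^{(j)}$ of degree $r-1$), and add the cyclic edges $y^{(j)}x^{(j+1)}$ for $j\in\mathbb{Z}_r$. The result $C$ is $r$-regular and carries the free cyclic symmetry $\sigma\colon G^{(j)}\mapsto G^{(j+1)}$. The set $U=V(G^{(1)})\cup\cdots\cup V(G^{(r-1)})$ has even cardinality $(r-1)n$ and exactly the two boundary edges $y^{(0)}x^{(1)}$ and $y^{(r-1)}x^{(0)}$; the single $2$-cut reduction of $U$ deletes these copies and adds the edge $x^{(0)}y^{(0)}$, which is precisely the copy of $e_0$, so $C$ reduces to $G$. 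Hence it suffices to produce a simple rotation $T_i^r$-graph that reduces to $C$.

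Next I would build the rotational scaffold. Choose $i$ large and take the abstract tree $T_i^r$ with its order-$r$ rotational automorphism $\alpha$ fixing the root $v$. I would use the $\sigma$-invariant perfect matching $M'$ of $C$ (the copies of $M\setminus\{e_0\}$ together with the cyclic edges $y^{(j)}x^{(j+1)}$), so that $C-M'$ is $(r-1)$-regular, eulerian and $\mathbb{Z}_r$-symmetric. The plan is to attach the leaves of $T_i^r$ so that the non-tree edges realise $C-M'$ on the ``real'' leaves, using the extra leaves for $\alpha$-symmetric padding gadgets (even sets that will later contract away). Since the number of leaves $r(r-1)^{i-1}$ need not equal $|V(C)|$, the padding both fixes the leaf count and buys simplicity: by enlarging $i$ and routing each would-be parallel pair through a short even gadget I can guarantee $G'$ is simple. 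By construction $G'$ is $r$-regular, has $T_i^r$ as a spanning tree, and admits $\alpha$ as a rotational automorphism, so it is a rotation $T_i^r$-graph; the reductions then contract the internal tree to restore the matching $M'$ (each restored edge coming from a subtree that, once its padding has been removed, is an even set with exactly two boundary edges landing on the two leaves to be joined), recovering $C$, and finally $U$ to recover $G$.

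The hard part is maintaining the $r$-graph property throughout. The blow-up deliberately creates many $2$-edge-cuts, so I must show that every one of them, and indeed every cut with fewer than $r$ edges, encloses an \emph{even} set; no odd set may have boundary smaller than $r$. I would prove this by a cut-tracking argument: given $S\subseteq V(G')$ with $|\partial(S)|<r$, intersect $S$ with the root, with each symmetric copy, and with each padding gadget, and use that every gadget is (a subdivision of) the $r$-graph $G$ glued along its known two-edge interfaces; a parity count on the interface edges forces $S$ to be even, for otherwise one extracts from $S$ an odd set inside a single copy of $G$ with boundary below $r$, contradicting that $G$ is an $r$-graph. This inheritance step, complicated precisely because $r$ is odd --- so the natural ``per-copy'' sets are odd and only the allowance of $2$-edge-cuts keeps the construction alive --- is where essentially all the work lies; the symmetry, the spanning-tree shape, and the reduction sequence are then largely bookkeeping.
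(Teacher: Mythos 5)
Your opening moves are sound and, in spirit, parallel to the paper's: the existence of a perfect matching $M$ in an $r$-graph, the eulerian structure of $G-M$, and the cyclic gadget $C$ built from $r$ copies of $G$ with one matching edge opened in each (which mirrors the paper's final assembly of $r$ copies of an expanded graph and reduces to $G$ by a single $2$-cut reduction) are all correct, modulo the unverified claim that $C$ is itself an $r$-graph; that check cannot use Rizzi's lemma on the $2$-cut $\partial(U)$ and instead needs an $r$-cut such as $\partial\bigl(V(G^{(1)})\cup\dots\cup V(G^{(r-1)})\cup\{x^{(0)}\}\bigr)$.

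The genuine gap is the main step, turning $C$ into a simple rotation $T_i^r$-graph. You propose to place the vertices of $C$ as leaves of $T_i^r$, realize $C-M'$ by the non-tree edges, and recover each edge of $M'$ by reducing ``a subtree\dots with exactly two boundary edges landing on the two leaves to be joined.'' This cannot work as described. The interior of $T_i^r$ is a connected tree in which every vertex has tree-degree $r\geq 3$, and each real leaf sends exactly one edge into it, so the interior plus padding has $rn$ boundary edges to the real leaves; these must all be consumed, two at a time, by reduction sets, while every edge between two distinct reduction sets consumes further boundary slots. In particular, whichever set contains the root must contain all but at most two of the branches below it, and its boundary then contains the pendant edges of every real leaf in those branches --- far more than two. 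Hence no partition of the interior into even sets with two boundary edges aimed at matched leaf pairs exists, and a rescue via cascading reductions (added edges later swallowed by further reductions) is a different, unconstructed mechanism. The paper avoids this entirely: it never dissolves $G$ into the leaf set, but expands $G$ to a graph $H$ with spanning tree $T_{d+1}^r$ via local edge- and leaf-expansions (each undone by one obvious $2$-cut reduction) and hangs $r$ \emph{intact} copies of $H$, together with auxiliary rotation graphs, off a small central tree, so that one copy is separated by a $2$-cut and a single reduction recovers it. Your simplicity fix (``routing parallel pairs through gadgets'') and the $r$-graph verification (``cut-tracking'') are likewise only gestured at, whereas the paper obtains both cheaply from property (i) of leaf-expansions and from Rizzi's lemma applied to a bipartite contraction of $G'$.
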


	The following corollary is a direct consequence of Theorem \ref{main result 1}.

\begin{cor}
	Let $r$ be a positive odd integer and let $A$ be a graph-property that is preserved under $2$-cut reduction. Every $r$-graph has property $A$ if and only if every simple rotation $r$-graph has property $A$.
\end{cor}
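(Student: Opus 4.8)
The plan is to derive both implications of the biconditional directly from Theorem \ref{main result 1}, treating the two directions separately; no new construction is needed, since the hard work is already contained in the theorem.

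The forward direction is immediate. If every $r$-graph has property $A$, then, because every simple rotation $r$-graph is by definition an $r$-graph, every simple rotation $r$-graph has $A$ as a special case. For the converse I would assume that every simple rotation $r$-graph has $A$ and let $G$ be an arbitrary $r$-graph. By Theorem \ref{main result 1} there is a simple rotation $r$-graph $G'$ together with a finite sequence of $2$-cut reductions $G'=G_0\to G_1\to\dots\to G_k=G$. By hypothesis $G_0$ has $A$, and I would then induct on the length of this sequence: assuming $G_j$ has $A$, the graph $G_{j+1}$ is obtained from $G_j$ by a single $2$-cut reduction, hence $G_{j+1}$ has $A$ precisely because $A$ is preserved under $2$-cut reduction. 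Iterating along the whole chain gives that $G=G_k$ has $A$, which establishes the converse and completes the equivalence.

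I expect essentially no obstacle here, the statement being flagged as a direct consequence of the theorem. The only point requiring a little care is to apply the preservation hypothesis step by step along the finite reduction sequence rather than in one stroke: the hypothesis is phrased for a single $2$-cut reduction, so the passage from $G'$ to $G$ must be justified by the short induction above, using that Theorem \ref{main result 1} supplies a valid reduction at every step. Everything else reduces to the observation that the class of simple rotation $r$-graphs is contained in the class of $r$-graphs, so that the forward direction is a mere specialization.
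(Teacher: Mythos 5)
Your proposal is correct and matches what the paper intends: the paper states the corollary as a direct consequence of Theorem \ref{main result 1} without spelling out a proof, and your argument (forward direction by noting that rotation $r$-graphs are by definition $r$-graphs, converse by inducting along the finite chain of $2$-cut reductions supplied by the theorem) is exactly the routine verification being left to the reader.
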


	As a consequence, some notoriously difficult conjectures can be reduced to rotation $r$-graphs.

\begin{cor} \label{Cor: BF}
	Let $r$ be an odd integer. The following statements are equivalent:
	\begin{enumerate}
		\item (generalized Fulkerson Conjecture \cite{seymour1979multi}) 
		For $r \geq 1$, every $r$-graph $G$ has a collection of $2r$ perfect matchings such that every edge of $G$ is in precisely two of them. 
		\item For $r \geq 1$, every simple rotation $r$-graph $G$ has a collection of $2r$ perfect matchings such that every edge of $G$ is in precisely two of them.
		\item (generalized Berge Conjecture) For $r \geq 1$, every $r$-graph $G$ has a collection of $2r-1$ perfect matchings such that every edge of $G$ is in at least one of them. 
		\item For $r \geq 1$, every simple rotation $r$-graph $G$ has a collection of $2r-1$ perfect matchings such that every edge of $G$ is in at least one of them. 
	\end{enumerate}
\end{cor}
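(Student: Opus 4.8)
The plan is to deduce all four equivalences from Theorem~\ref{main result 1} together with the general reduction corollary that immediately precedes the statement. Concretely, I would split the argument into three parts: the equivalence $(1)\Leftrightarrow(2)$, the equivalence $(3)\Leftrightarrow(4)$, and the equivalence $(1)\Leftrightarrow(3)$. The first two are instances of the general corollary, applied respectively to the ``Fulkerson property'' $A_F$ (``$G$ admits $2r$ perfect matchings covering every edge exactly twice'') and the ``Berge property'' $A_B$ (``$G$ admits $2r-1$ perfect matchings covering every edge at least once''). To invoke the general corollary I must verify that each of $A_F$ and $A_B$ is preserved under $2$-cut reduction; this is the only genuinely new work, since the forward direction in that corollary is trivial and its reverse direction only needs that the property descends along a reduction. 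The third equivalence is the classical equivalence of the generalized Berge and Fulkerson conjectures, which I would cite.

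The preservation lemma rests on a single parity observation. Let $G$ be an $r$-graph, let $S$ be as in the definition of a $2$-cut reduction with $\partial(S)=\{e_u,e_v\}$ and $N_G(S)=\{u,v\}$, and let $G'$ be the reduction, with new edge $uv$. For any perfect matching $M$ of $G$, writing $a$ for the number of $M$-edges inside $S$ and $b=\vert M\cap\partial(S)\vert$, we have $2a+b=\vert S\vert$; since $\vert S\vert$ is even and $\vert\partial(S)\vert=2$, it follows that $b\in\{0,2\}$, i.e.\ $e_u\in M\Leftrightarrow e_v\in M$. Given any family $M_1,\dots,M_m$ of perfect matchings of $G$ (with $m=2r$ or $m=2r-1$), I would map each $M_i$ to a perfect matching $M_i'$ of $G'$ by deleting all edges incident to $S$ and inserting $uv$ exactly when $M_i$ used $e_u$ and $e_v$, and otherwise just restricting $M_i$ to $E(G')$. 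The parity observation guarantees each $M_i'$ is indeed a perfect matching. For an edge $f\in E(G')\setminus\{uv\}$, which is also an edge of $G$, we have $f\in M_i'\Leftrightarrow f\in M_i$, so its covering multiplicity is unchanged; meanwhile $uv$ lies in exactly those $M_i'$ for which $e_u\in M_i$ (equivalently $e_v\in M_i$), so its multiplicity equals that of $e_u$ in $G$. Hence ``exactly twice'' is inherited edge-by-edge, proving $A_F$ is preserved, and ``at least once'' likewise, proving $A_B$ is preserved. Applying the general corollary then yields $(1)\Leftrightarrow(2)$ and $(3)\Leftrightarrow(4)$.

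For $(1)\Leftrightarrow(3)$, the implication $(1)\Rightarrow(3)$ is immediate: from $2r$ matchings covering every edge exactly twice, discarding any single matching leaves $2r-1$ matchings that still cover every edge at least once. The reverse implication $(3)\Rightarrow(1)$ is the substantive input and the main obstacle of the whole corollary; this is exactly the (generalized) equivalence of the Berge and Fulkerson conjectures, established for cubic graphs by Mazzuoccolo and extending to $r$-graphs, which I would invoke as a black box. Combining the three equivalences shows that $(1),(2),(3),(4)$ all coincide. The only routine step that requires a little care is the bookkeeping in the preservation lemma when the two cut-edges $e_u,e_v$ share an endpoint inside $S$, but the parity argument is insensitive to this and the construction of $M_i'$ goes through verbatim.
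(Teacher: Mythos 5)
Your proposal is correct and follows essentially the same route as the paper: the same parity argument showing that a perfect matching of the larger graph meets $\partial(S)$ in $0$ or $2$ edges and hence descends through a $2$-cut reduction (giving $(1)\Leftrightarrow(2)$ and $(3)\Leftrightarrow(4)$ via Theorem~\ref{main result 1}), and the same citation of Mazzuoccolo for $(1)\Leftrightarrow(3)$. You merely spell out the bookkeeping (the identity $2a+b=\vert S\vert$ and the explicit construction of the matchings $M_i'$) in more detail than the paper does.
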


\begin{proof}
	Let $G$ and $G'$ be two $r$-graphs such that $G$ can be obtained from $G'$ by a 2-cut reduction of a set $S \subset V(G')$. For parity reasons, every perfect matching of $G'$ contains either both or no edges of $\partial(S)$. Hence, each perfect matching of $G'$ can be transformed into a perfect matching of $G$, which implies the equivalences ($1 \Leftrightarrow 2$) and ($3 \Leftrightarrow 4$). The equivalence ($1 \Leftrightarrow 3$)	is proved in \cite{Mazzuoccolo_Equiv_gen_BFC_2013}.
\end{proof}

	For $r=3$, statement 3 of Corollary \ref{Cor: BF} is usually attributed to Berge and statement 1 was first put in print in \cite{fulkerson1971blocking}. Fan and Raspaud \cite{fan1994fulkerson} conjectured that every $3$-graph has three perfect matchings such that every edge is in at most two of them. Equivalent formulations of this conjecture are studied in \cite{Jin_et_al_Fano_flows_Fan_Raspaud}.

\begin{cor} \label{Cor: FR} Let $r$ be an odd integer and $ 2 \leq k \leq r-1$. Every $r$-graph has $r$ perfect matchings, such that each edge is in at most $k$ of them if and only if every simple rotation $r$-graph has $r$ perfect matchings, such that each edge is in at most $k$ of them.
\end{cor}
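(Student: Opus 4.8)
The plan is to follow the template of the proof of Corollary \ref{Cor: BF} and to reduce the statement to the behaviour of a single $2$-cut reduction. The only-if direction is immediate, since a simple rotation $r$-graph is in particular an $r$-graph: if every $r$-graph admits $r$ perfect matchings with each edge in at most $k$ of them, then so does every simple rotation $r$-graph. For the if direction, let $G$ be an arbitrary $r$-graph. By Theorem \ref{main result 1} there is a simple rotation $r$-graph $G'$ from which $G$ arises by a finite sequence of $2$-cut reductions, and by assumption $G'$ carries a family of $r$ perfect matchings in which every edge lies in at most $k$ of them. It therefore suffices to show that this property is preserved by a single $2$-cut reduction and then to induct on the length of the sequence.

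So suppose $G$ is obtained from $G'$ by a $2$-cut reduction of an even set $S$ with $\partial(S)=\{e_u,e_v\}$, where $e_u$ joins $u$ to $S$ and $e_v$ joins $v$ to $S$, and let $uv$ be the new edge of $G$. The key structural fact (already exploited in Corollary \ref{Cor: BF}) is that, for parity reasons, each perfect matching $M'$ of $G'$ meets $\partial(S)$ in an even number of edges, hence in either both of $e_u,e_v$ or in none. I would turn each $M_i'$ of the given family into a perfect matching $M_i$ of $G$ by the obvious local surgery: if $e_u,e_v\in M_i'$ then delete all edges of $M_i'$ incident with $S$ and insert $uv$; if $M_i'\cap\partial(S)=\emptyset$ then simply delete the edges of $M_i'$ inside $S$. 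In both cases every vertex of $G$ remains matched, so each $M_i$ is a perfect matching of $G$.

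It then remains to track multiplicities. For an edge $f\neq uv$ of $G$ (necessarily an edge disjoint from $S$), membership in $M_i$ agrees with membership in $M_i'$, so $f$ lies in the same matchings as before and hence in at most $k$ of them. For the new edge, $uv\in M_i$ holds exactly when $e_u\in M_i'$, so the number of $M_i$ containing $uv$ equals the number of $M_i'$ containing $e_u$, which is at most $k$. Thus $M_1,\dots,M_r$ witness the property for $G$, completing the induction and the if direction.

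I do not expect a genuine obstacle here: the argument is a direct adaptation of the matching-transfer in Corollary \ref{Cor: BF}, and the inequality ``at most $k$'' is preserved for exactly the same reason the exact count $2$ is preserved there, since the surgery never increases the number of matchings through any surviving edge. The only points that need care are checking that deleting the internal $S$-edges leaves a legitimate perfect matching in each case and that the count at the new edge $uv$ is governed by $e_u$ (equivalently $e_v$) rather than by the internal structure of $S$; both are routine. The hypothesis $2\le k\le r-1$ plays no role in the transfer and only delimits the interesting range of the conjecture.
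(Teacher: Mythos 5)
Your proof is correct and follows essentially the same route as the paper: the paper proves Corollary \ref{Cor: FR} implicitly by the same parity-based perfect-matching transfer under a single $2$-cut reduction that it spells out in the proof of Corollary \ref{Cor: BF}, combined with Theorem \ref{main result 1}. Your explicit surgery on the matchings and the multiplicity bookkeeping at the new edge $uv$ are exactly the details the paper leaves to the reader.
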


	A \textit{nowhere-zero $k$-flow} of a graph $G$ is a mapping $f:E(G) \to \{\pm 1,\dots,\pm (k-1)\}$ together with an orientation of the edges, such that the sum of $f$ over all incoming edges of $v$ equals the sum of $f$ over all out going edges of $v$ for every $v \in V(G)$. In 1954, Tutte \cite{tutte_1954}] stated his seminal conjecture that every bridgeless graph admits a nowhere-zero 5-flow. It is folklore that the conjecture can be reduced to snarks. In 1972, Tutte formulated the no less challenging conjecture that every simple 5-graph admits a nowhere-zero 3-flow (see \cite{bondy1976graph} unsolved problem 48). Admitting a nowhere-zero $k$-flow is invariant under $2$-cut reduction. Hence, we obtain the following consequences of Theorem \ref{main result 1}.

\begin{cor} \label{Cor: 5-flow}
	Every snark admits a nowhere-zero $5$-flow if and only if every simple rotation snark admits a nowhere-zero $5$-flow.
\end{cor}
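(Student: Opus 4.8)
The plan is to establish the two implications separately, invoking Theorem~\ref{main result 1} only for the non-trivial one. First I would dispose of the forward direction, which is immediate from the definitions: a simple rotation snark is in particular a snark, so if every snark admits a nowhere-zero $5$-flow, then so does every simple rotation snark. The whole content lies in the converse.

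For the converse I would begin with an arbitrary snark $G$ and observe that, for cubic graphs, being a $3$-graph is equivalent to being bridgeless (a size-$1$ edge cut is a bridge, and by parity every odd edge cut in a cubic graph has odd size, hence size $\geq 3$ once bridges are excluded). Thus $G$ is a $3$-graph, and Theorem~\ref{main result 1} with $r=3$ produces a simple rotation $3$-graph $G'$ from which $G$ arises by finitely many $2$-cut reductions. The crucial step is then to verify that $G'$ is itself a snark, that is, that $G'$ is not $3$-edge-colorable. Here I would combine Tutte's classical equivalence that a cubic graph is $3$-edge-colorable if and only if it admits a nowhere-zero $4$-flow with the invariance of nowhere-zero $k$-flow admissibility under $2$-cut reduction (stated above), applied with $k=4$. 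Since $G$ is a snark it has no nowhere-zero $4$-flow; by the invariance, read in the direction from the reduced graph $G$ back up to $G'$, the graph $G'$ has no nowhere-zero $4$-flow either, so $G'$ is a bridgeless cubic graph that is not $3$-edge-colorable, i.e.\ a simple rotation snark.

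It then remains only to transfer the flow. Applying the hypothesis to $G'$ gives a nowhere-zero $5$-flow of $G'$, and since admitting such a flow is invariant under $2$-cut reduction (now with $k=5$) and $G$ is obtained from $G'$ by finitely many reductions, $G$ inherits a nowhere-zero $5$-flow, completing the proof. I expect the main obstacle to be precisely the verification that $G'$ is non-$3$-edge-colorable: one must read the $2$-cut invariance in the ``reverse'' direction and translate $3$-edge-colorability of a cubic graph into an nz-$4$-flow condition, so that the hypothesis on simple rotation snarks genuinely applies to $G'$. Everything else---the trivial inclusion of simple rotation snarks among snarks and the final $5$-flow transfer---follows directly from the stated invariances and Theorem~\ref{main result 1}.
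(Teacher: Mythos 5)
Your proof is correct and follows essentially the same route as the paper, which derives this corollary directly from Theorem~\ref{main result 1} together with the invariance of nowhere-zero $k$-flow admissibility under $2$-cut reduction. The one point you spell out that the paper leaves implicit is the verification that $G'$ is itself a snark; your argument via Tutte's nz-$4$-flow characterization (or, equivalently, the observation that a $3$-edge-coloring of $G'$ would descend to $G$ since both edges of a $2$-cut receive the same colour) closes that gap correctly, using only the easy ``downward'' direction of the invariance.
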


\begin{cor} \label{Cor: 3-flow}
	Every $5$-graph admits a nowhere-zero $3$-flow if and only if every simple rotation $5$-graph admits a nowhere-zero $3$-flow.
\end{cor}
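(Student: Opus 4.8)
The plan is to realize Corollary \ref{Cor: 3-flow} as the instance $r=5$ of the general reduction principle behind Theorem \ref{main result 1}: if a graph property is preserved by $2$-cut reductions, then it holds for all $r$-graphs precisely when it holds for all simple rotation $r$-graphs. Since $5$ is odd, Theorem \ref{main result 1} is available. The forward implication is free, because every simple rotation $5$-graph is in particular a $5$-graph, so if all $5$-graphs admit nowhere-zero $3$-flows then so do all simple rotation $5$-graphs. For the reverse implication, assume every simple rotation $5$-graph has a nowhere-zero $3$-flow and let $G$ be any $5$-graph. Theorem \ref{main result 1} produces a simple rotation $5$-graph $G'$ from which $G$ arises by finitely many $2$-cut reductions, and $G'$ has a nowhere-zero $3$-flow by assumption; it then remains only to push this flow down the reduction sequence.

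Thus the whole statement reduces to one local claim: if $B$ is obtained from $B^+$ by a single $2$-cut reduction of a set $S$ and $B^+$ has a nowhere-zero $3$-flow, then so does $B$. To see this, write $\partial(S)=\{e_1,e_2\}$ and $N(S)=\{u,v\}$, and fix a nowhere-zero $3$-flow $f$ on $B^+$ with its orientation. Summing the conservation law over all vertices of $S$ makes the internal edges cancel and leaves the net flow across $\partial(S)$ equal to zero; since $f$ is nowhere zero, this forces a single nonzero value $a\in\{\pm1,\pm2\}$ to pass through the block $B^+[S]$, entering along one of $e_1,e_2$ and leaving along the other. Deleting $B^+[S]\cup\partial(S)$ and letting the new edge $uv$ carry this value $a$ (oriented from $u$ to $v$) keeps every conservation equation at a vertex outside $S$ intact and uses only values in $\{\pm1,\pm2\}$, so it is a nowhere-zero $3$-flow on $B$. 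Iterating along the finite reduction sequence equips $G$ with a nowhere-zero $3$-flow, which finishes the reverse implication.

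The only genuinely delicate step is this bookkeeping across the $2$-edge-cut: one must confirm that conservation on the even set $S$ forces the two cut edges to transmit a common nonzero value, and that handing this value to the replacement edge perturbs no other vertex balance. I expect this to be the main—indeed essentially the only—obstacle, and it is mild. It is worth noting that the blow-up direction of Theorem \ref{main result 1} is exactly what lets us use only this "push-down" direction of the invariance: we always begin at the large graph $G'$, which already carries a flow, and reduce downward. The converse "push-up" routing of a prescribed value back through $B^+[S]$ is never needed here, although in $\mathbb{Z}_3$ it would follow by multiplying the entire flow by $\pm1$. The same flow invariance, now with value set $\{\pm1,\dots,\pm4\}$, underlies the analogous Corollary \ref{Cor: 5-flow}.
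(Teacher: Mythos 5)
Your proposal is correct and follows the same route as the paper: the paper simply observes that admitting a nowhere-zero $k$-flow is invariant under $2$-cut reduction and then invokes Theorem \ref{main result 1} (via the corollary on properties preserved by $2$-cut reduction). Your explicit verification that the two cut edges of $\partial(S)$ must carry a common nonzero value, which is then assigned to the new edge $uv$, is exactly the standard argument the paper leaves implicit.
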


\section{Proof of Theorem \ref{main result 1}}

\subsection{Preliminaries}

For the proof of Theorem \ref{main result 1} we will use the following lemma. The non-trivial direction of the statement is proved by Rizzi in \cite{rizzi1999indecomposable} (Lemma 2.3).

\begin{lem} [\cite{rizzi1999indecomposable}] \label{Rizzilemma}
	Let $G$ be an $r$-regular graph and let $S \subseteq V(G)$ be a set of odd cardinality with $\vert \partial(S) \vert=r$. Let $G_{S}$ and $G_{\bar{S}}$ be the graphs obtained from $G$ by contracting $S$ and $\bar{S}=V(G) \setminus S$ to single vertices and removing all resulting loops, respectively. The graph $G$ is an $r$-graph, if and only if $G_S$ and $G_{\bar{S}}$ are both $r$-graphs.
\end{lem}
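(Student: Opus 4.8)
The plan is to relate cuts in $G$ to cuts in $G_S$ and $G_{\bar{S}}$ using two elementary facts: contraction preserves cuts that lie entirely on one side of $\partial(S)$, and the cut function $Y \mapsto |\partial_H(Y)|$ is submodular, that is, $|\partial_H(Y_1)| + |\partial_H(Y_2)| \geq |\partial_H(Y_1 \cup Y_2)| + |\partial_H(Y_1 \cap Y_2)|$ for all $Y_1, Y_2 \subseteq V(H)$. The preservation fact reads as follows: for $A \subseteq S$ we have $\partial_{G_{\bar{S}}}(A) = \partial_G(A)$, since contracting $\bar{S}$ merges only vertices of $\bar{S}$ and creates no loop incident to $A$; symmetrically $\partial_{G_S}(B) = \partial_G(B)$ for $B \subseteq \bar{S}$. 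Passing to complements inside the contracted graphs also gives $|\partial_{G_{\bar{S}}}(S \setminus A)| = |\partial_G(\bar{S} \cup A)|$ and $|\partial_{G_S}(\bar{S} \setminus B)| = |\partial_G(S \cup B)|$. Throughout I use that $|S|$ is odd and that any $r$-graph has even order (the set $V$ has empty boundary), so that $|\bar{S}|$ is odd as well.

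For the implication that $G$ being an $r$-graph makes $G_S$ and $G_{\bar{S}}$ $r$-graphs, I would take an odd set $X \subseteq V(G_S)$ and split on whether the contracted vertex $s$ lies in $X$. If $s \notin X$, then $X \subseteq \bar{S}$ is odd in $G$ and $|\partial_{G_S}(X)| = |\partial_G(X)| \geq r$. If $s \in X$, write $X = X' \cup \{s\}$ with $X' \subseteq \bar{S}$; then $X' \cup S$ is odd in $G$ and $|\partial_{G_S}(X)| = |\partial_G(X' \cup S)| \geq r$. The argument for $G_{\bar{S}}$ is identical.

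The substantial direction is the converse. Assume $G_S$ and $G_{\bar{S}}$ are $r$-graphs, fix an odd $X \subseteq V(G)$, and put $A = X \cap S$ and $B = X \cap \bar{S}$; exactly one of $|A|, |B|$ is odd. Applying submodularity to the pair $(X, S)$ and using $|\partial_G(S)| = r$ yields $|\partial_G(X)| \geq |\partial_G(S \cup B)| + |\partial_G(A)| - r = |\partial_{G_S}(\bar{S} \setminus B)| + |\partial_{G_{\bar{S}}}(A)| - r$, while applying it to $(X, \bar{S})$ yields $|\partial_G(X)| \geq |\partial_{G_{\bar{S}}}(S \setminus A)| + |\partial_{G_S}(B)| - r$, where in each case I have rewritten the boundaries by the preservation fact.

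The crux, and the step I expect to be the main obstacle, is to choose the right one of these two inequalities so that both sets on the right are odd in the contracted graph to which they belong; each such cut is then at least $r$ and the bound collapses to $r + r - r = r$. This is settled by a short parity count using that $|S|$ and $|\bar{S}|$ are odd: if $|A|$ is odd then $A$ is odd in $G_{\bar{S}}$ and, since $|B|$ is even, $\bar{S} \setminus B$ is odd in $G_S$, so the first inequality gives $|\partial_G(X)| \geq r$; if instead $|B|$ is odd then $B$ is odd in $G_S$ and $S \setminus A$ is odd in $G_{\bar{S}}$, so the second inequality gives $|\partial_G(X)| \geq r$. Finally I would note the degenerate cases $A \in \{\emptyset, S\}$ and $B \in \{\emptyset, \bar{S}\}$, where the relevant contracted set reduces to a single contracted vertex whose boundary is exactly $\partial(S)$ of size $r$, so the estimate still holds. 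This establishes $|\partial_G(X)| \geq r$ for every odd $X$, hence $G$ is an $r$-graph.
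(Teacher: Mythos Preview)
Your argument is correct. The paper does not actually prove this lemma: it only says that the non-trivial direction is Lemma~2.3 of Rizzi~\cite{rizzi1999indecomposable} and moves on. Your self-contained proof via submodularity of the cut function, together with the parity case split that selects which of the two inequalities $(X,S)$ or $(X,\bar{S})$ to use, is the standard route to such statements and goes through as written. One cosmetic remark: in the converse direction the fact that $|\bar{S}|$ is odd should be read off from $G_S$ being an $r$-graph (hence of even order $|\bar{S}|+1$), not from $G$; your ``throughout'' phrasing leaves this slightly implicit, but the logic is sound.
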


	Let $G$ be an $r$-graph and $T$ be a spanning tree of $G$. We need the following two expansions of $G$ and $T$. 

{\bf Edge-expansion:}
Let $e$ be an edge with $e=uv \in E(G) \setminus E(T)$. Let $G'$ be the graph obtained from $G-e$ by adding two new vertices $u', v'$ that are connected by $r-1$ edges, and adding two edges $uu'$ and $vv'$. Extend $T$ to a spanning tree $T'$ of $G'$ by adding the edges $uu'$ and $vv'$ (see Figure \ref{fig:1}). For $S = \{u,u',v'\}$ it follows with Lemma \ref{Rizzilemma} that $G'$ is an $r$-graph. 

\begin{figure}[H]
	\centering
	\includegraphics[height=1.7cm]{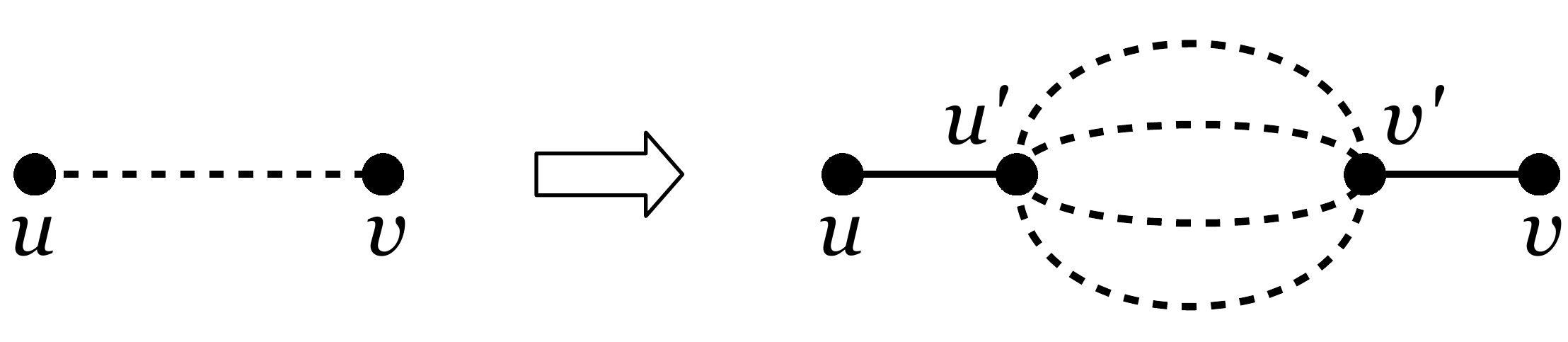}
	\caption{An edge-expansion in the case $r=5$. The solid edges belong to the spanning tree $T'$.}
	\label{fig:1}
\end{figure}

{\bf Leaf-expansion:} Let $r$ be odd. Let $l$ be a leaf of $T$ and let $u$ be the neighbor of $l$ in $T$. Let $K$ be a copy of the complete graph on $r$ vertices and $V(K) = \{l_1, \dots, l_r\}$. Let $G'$ be the $r$-regular graph obtained from $G-l$ and $K$ by connecting every vertex of $K$ with a neighbor of $l$. Without loss of generality we assume $ul_1 \in E(G')$. Extend $T-l$ to a spanning tree $T'$ of $G'$ by adding $V(K)$ and the edges $ul_1$ and $l_1l_j$ for $j \in \{2, \dots,r\}$. Vertex $l_1$ has degree $r$ in $T'$, whereas all other vertices of $K$ are leaves of $T'$. Furthermore, if $l$ has distance $d$ to a vertex $x \in V(T)$, then the $r-1$ leaves $l_2, \dots, l_r$ of $T'$ have distance $d+1$ to $x$ in $T'$.

	Since $K_{r+1}$ is an $r$-graph, $G'$ is an $r$-graph by Lemma \ref{Rizzilemma}. We note that a leaf-expansion of leaf $l$ has the following properties:
\begin{itemize}
\item[(i)] In $G'$, no vertex of $K$ is incident with parallel edges.
\item[(ii)] Let $S\subseteq V(G)$ be a set of even cardinality with $l \in S$ and $\vert \partial(S) \vert =2$. In the leaf-expansion $G'$, the set $S'=S \setminus \{l\} \cup V(K)$ is of even cardinality and satisfies $\vert \partial(S') \vert =2$. Moreover, the graph obtained from $G$ by a 2-cut reduction of $S$ is the same graph that is obtained from $G'$ by a 2-cut reduction of $S'$.
\end{itemize}

	An example of leaf-expansion is shown in Figure \ref{fig:2}. 

\begin{figure}[H]
	\centering
	\includegraphics[height=4cm]{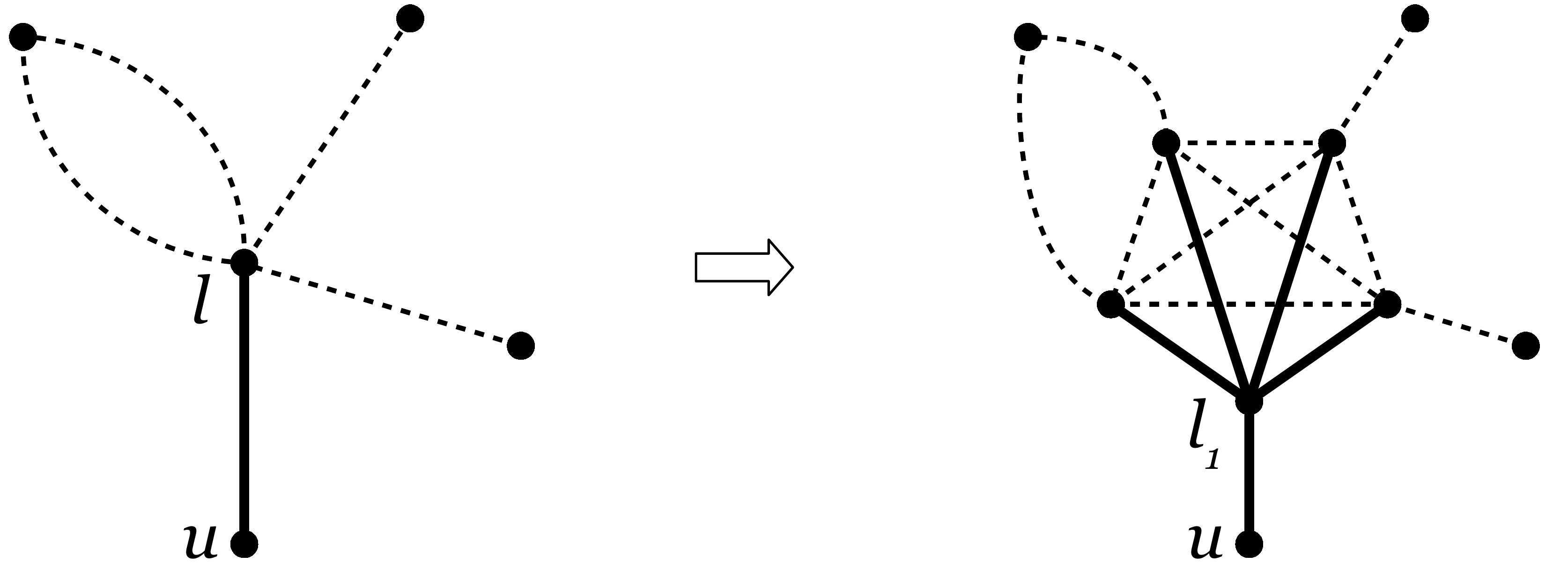}
	\caption{An example of a leaf-expansion of the leaf $l \in V(G)$ in the case $r=5$. The solid edges belong to the spanning trees $T$ and $T'$ respectively.}
	\label{fig:2}
\end{figure}

\subsection{Construction of $G'$}
 
Let $r \geq 1$ be an odd integer and $G$ be an $r$-graph. We will construct $G'$ in two steps.\\

	1. We construct a simple $r$-graph $H$ with a spanning tree $T_H$ isomorphic to $T_{i}^r$ for some integer $i$ such that $G$ can be obtained from $H$ by a finite number of $2$-cut reductions.

	Let $T_G$ be an arbitrary spanning tree of $G$. Apply an edge-expansion on every edge of $E(G) \setminus E(T_G)$ to obtain an $r$-graph $H_1$ with spanning tree $T_1$. Clearly, $G$ can be obtained from $H_1$ by 2-cut reductions. Furthermore, $V(G) \subseteq V(H_1)$, every vertex of $V(G)$ has degree $r$ in $T_1$ and all vertices of $V(H_1) \setminus V(G)$ are leaves of $T_1$.

	Let $x \in V(H_1)$ with $d_{T_1}(x) = r$ and let $d$ be the maximal distance of $x$ to a leaf in $T_1$. Repeatedly apply leaf-expansions until every leaf has distance $d+1$ to $x$. Let $H_2$ be the resulting graph and $T_2$ be the resulting spanning tree of $H_2$. By the construction, $T_2$ is isomorphic to $T_{d+1}^r$, where $x$ is the root of $T_2$. By the definition of $d$, we applied a leaf-expansion of $l$ for every leaf $l$ of $T_1$. Hence, the graph $H_2$ is simple by property (i) of leaf-expansions. Furthermore, no expansion of a vertex in $V(G)$ (and degree $r$ in $T_1$) is applied. As a consequence, property (ii) of leaf-expansions implies that $G$ can be obtained from $H_2$ by 2-cut reductions. Thus, by setting $H=H_2$ and $T_H=T_2$ we obtain a graph with the desired properties.\\

	2. We construct a simple rotation $r$-graph $G'$ from which $H$ can be obtained by a 2-cut reduction.

	Let $y_1,\dots,y_r$ be the neighbors of $x$ in $H$. Let $R$ be an arbitrary simple rotation $r$-graph with a spanning tree $T_R$ isomorphic to $T_{d+1}^r$. For example, such a graph can be obtained from the rotational $T_1^r$-graph $K_{r+1}$ by repeatedly applying leaf-expansions. Let $x_R$ be the root of $T_R$ and let $\alpha_R$ be the corresponding rotational automorphism. Label the neighbors of $x_R$ with $z_1,\dots,z_{r}$ such that $\alpha_R(z_i)=z_{i+1}$ for every $i \in \{1,\dots,r\}$, where the indices are added modulo~$r$. 

	Take $r$ copies $H^1,\dots,H^r$ of $H$ and $(r-1)^2-r$ copies $R^1,\dots,R^{(r-1)^2-r}$ of $R$. In each copy we label the vertices accordingly by using an upper index. For example, if $v$ is a vertex of $H$, then $v^i$ is the corresponding vertex in $H^i$. Furthermore, the automorphism of $R^i$ that correspond to $\alpha_R$ will be denoted by $\alpha_{R^i}$. Delete the root in each of the $(r-1)^2$ copies, i.e.~in each copy of $H$ and in each copy of $R$. The resulting $r(r-1)^2$ vertices of degree $r-1$ are called root-neighbors. 

	Take a tree $T$ isomorphic to $T_2^r$ with root $x_T$. The graph $T \setminus x_T$ consists of $r$ pairwise isomorphic components, thus it has a rotation automorphism $\alpha_T$ with respect to $x_T$. Let $l_1,\dots,l_{r-1}$ be the leaves of one component of $T \setminus x_T$. Clearly, the set of leaves of $T$ is given by ${\{{\alpha^i_T}(l_j) \mid i \in \{0,\dots,r-1\}, j \in \{1,\dots,r-1\}\}}$, where $\alpha^0_T=id_T$. 

	Connect the $r(r-1)$ leaves of $T$ with the $r(r-1)^2$ root-neighbors by adding $r(r-1)^2$ new edges as follows. For every $i \in \{1,\dots,r\}$ define an ordered list $N_i$ of root-neighbors and an ordered list $L_i$ of leaves of $T$ by
\begin{align*}
N_i:=(y_1^i,\dots,y_r^i, z_i^1,\dots,z_i^{(r-1)^2-r})
\quad \text{and } \quad
L_i:=({\alpha^{i-1}_T}(l_1),\dots,{\alpha^{i-1}_T}(l_{r-1})).
\end{align*}

	The list $N_i$ has $(r-1)^2$ entries, whereas $L_i$ has $r-1$ entries. For each $i \in \{1,\dots,r\}$, connect the first $r-1$ entries of $N_i$ with the first entry of $L_i$ by $r-1$ new edges; connect the second $r-1$ entries of $N_i$ with the second entry of $L_i$ by $r-1$ new edges and so on. The set of new edges is denoted by $E$ and the resulting graph by $G'$. In Figure \ref{fig:3} the construction of $G'$ is shown in the case $r=3$.

\begin{figure}[H]
	\centering
	\includegraphics[height=7cm]{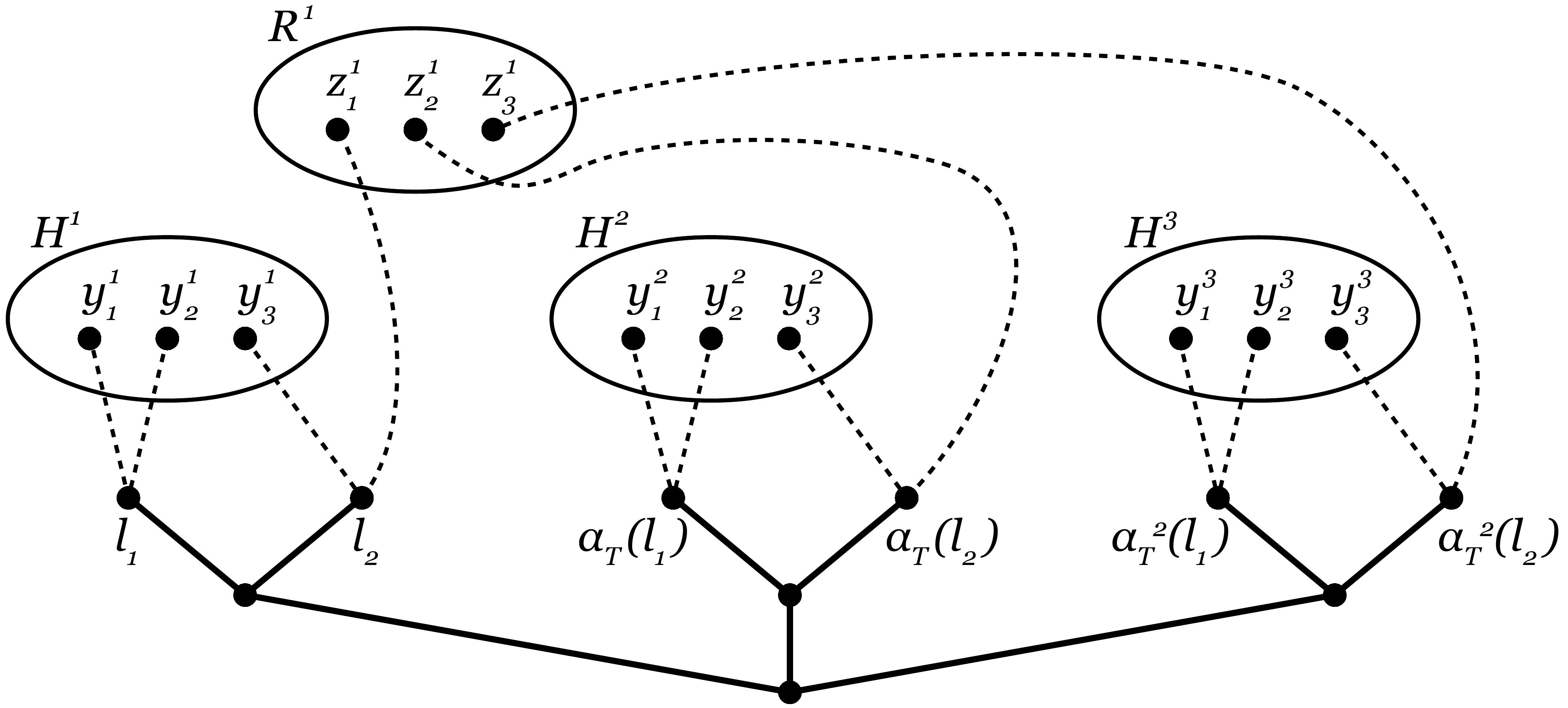}
	\caption{The construction of $G'$ in the case $r=3$. The solid edges belong to $T$; the dashed edges belong to $E$.}
	\label{fig:3}
\end{figure}

	Every root-neighbor appears exactly once in the lists $N_1,\dots,N_r$, whereas every leaf of $T$ appears exactly once in the lists $L_1,\dots,L_r$. Consequently, $G'$ is an $r$-regular simple graph with a spanning tree $T_{G'}$ that is obtained from the union of the trees of each copy of $H$ and $R$ (without its roots) and $T$ by adding the edge set $E$. Note that $T_{G'}$ is isomorphic to $T_{d+3}^r$ and $x_T$ is the root of $T_{G'}$. Let $\alpha_{G'} \colon V(G') \to V(G')$ be defined as follows:
 \begin{align*}
\alpha_{G'}(v)= \begin{cases} 
 				\alpha_T(v) & \text{ if } v \in V(T), \\
 				\alpha_{R^i}(v) & \text{ if } v \in V(R^i) \setminus \{x_{R}^i\},~ i \in \{1, \dots, (r-1)^2-r\}, \\
 				v^{i+1} & \text{ if } v=v^i \in V(H^i) \setminus \{x^i\},~ i \in \{1, \dots, r\} \text{ and the indices are added modulo $r$}.
 			\end{cases}	
 \end{align*}
	By definition, $\alpha_{G'}$ is an automorphism of $G' \setminus E$ and $T_{G'} \setminus E$ that fixes the root $x_T$ of $T$ and satisfies $d_{\alpha_{G'}}(v)=r$ for every other vertex $v$ of $G'$. For $i \in \{1,...,r\}$, if we apply $\alpha_{G'}$ on each element of $N_i$ (or $L_i$ respectively), then we obtain the ordered list $N_{i+1}$ (or $L_{i+1}$ respectively), where the indices are added modulo $r$. As a consequence, if $uv \in E$, then $\alpha_{G'}(u) \alpha_{G'}(v) \in E$ and hence, $\alpha_{G'}$ is an automorphism of $G'$ and a rotational automorphism of $T_{G'}$.

	To see that $G'$ is an $r$-graph, transform $G'$ as follows: for each $i \in \{1,\dots,r\}$ contract all vertices in $V(H^i) \setminus x^i$ to a vertex $\bar{H}^i$ and for every $j \in \{1,\dots,(r-1)^2-r\}$ all vertices in $V(R^j) \setminus x_R^j$ to a vertex $\bar{R}^j$, and remove all loops that are created (see Figure~\ref{fig:4}). The resulting graph is an $r$-regular bipartite graph and therefore, an $r$-graph. Since every copy of $H$ and of $R$ is an $r$-graph, it follows by successively application of Lemma \ref{Rizzilemma} that $G'$ is an $r$-graph.

\begin{figure}[H]
	\centering
	\includegraphics[height=6cm]{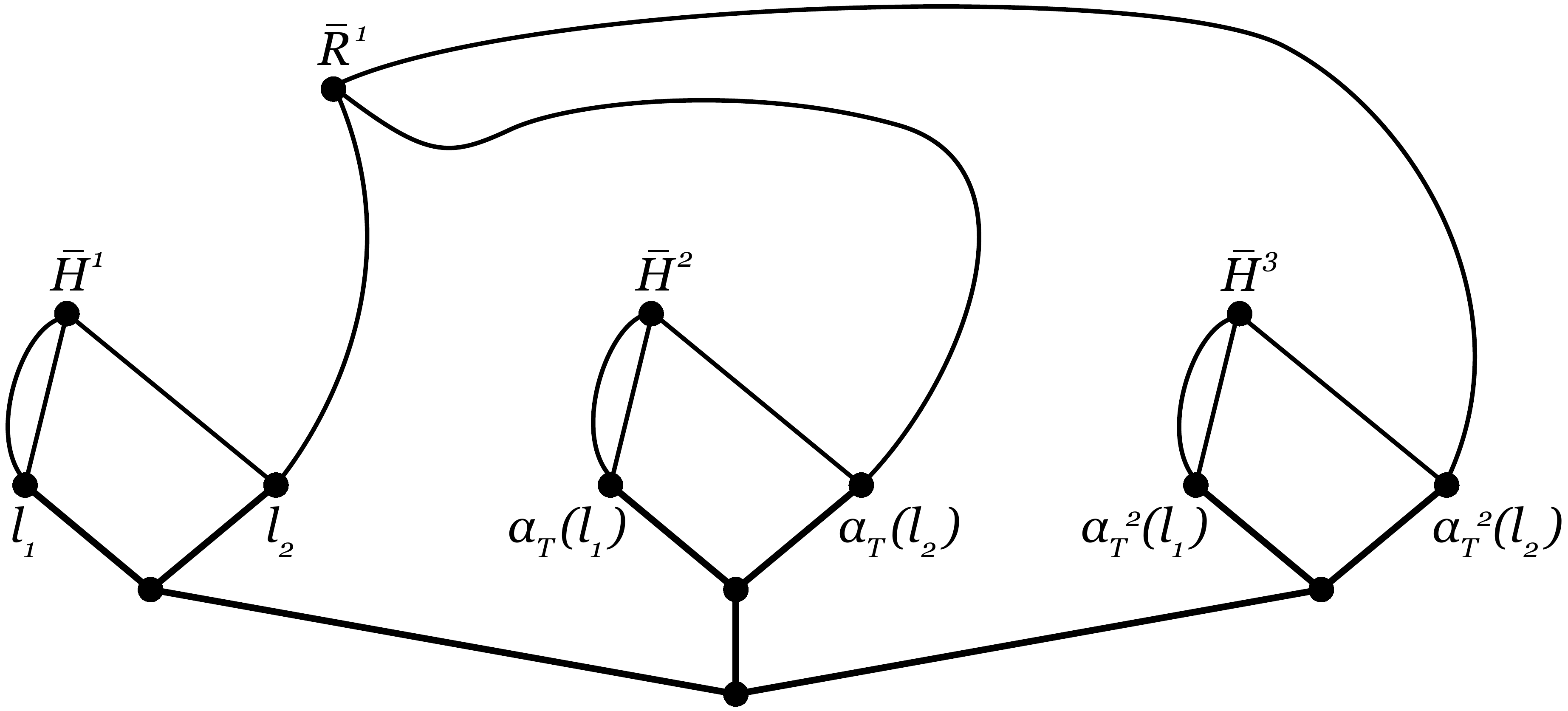}
	\caption{The graph constructed from $G'$ in the case $r=3$.}
	\label{fig:4}
\end{figure}

	Last, the set $S \subseteq V(G')$ defined by $S=V(H^1) \setminus\{x^1\}\cup \{l_1\}$ is a set of even cardinality, such that $\vert \partial(S) \vert =2$. Applying a 2-cut reduction on $V(G') \setminus S$ transforms $G'$ into the copy $H^1$ of $H$. In conclusion, $G$ can be obtained from $G'$ by a finite number of 2-cut reductions, which completes the proof.

\section{Concluding remarks}
The graph $G'$ constructed in the proof of Theorem \ref{main result 1} has many small edge-cuts. It would be interesting to construct and study highly edge-connected rotation $r$-graphs. For example, is there an $r$-edge-connected rotation $r$-graph with chromatic index $r+1$ for every positive odd integer $r$? It might also be possible to prove some of the conjectures mentioned in Corollaries \ref{Cor: BF} - \ref{Cor: 3-flow} for some families of rotation $r$-graphs with high edge-connectivity.



\end{document}